\newtheorem{thm}{Theorem}[section]
\newtheorem{lem}{Lemma}[section]
\title
{$L^p$-norm estimate for the Bergman projection on  Hartogs triangle}
\author{\normalsize Tomasz Beberok \\
\small Faculty of Mathematics and Computer Science, Jagiellonian University,\\
\small Lojasiewicza 6, 30-048 Krakow, Poland \\}
\date{}
\begin{document}

\begin{center}
  \textbf{$L^p$-norm estimate of the Bergman projection on the Hartogs triangle
}
\end{center}
\vskip1em
\begin{center}
  Tomasz Beberok
\end{center}

\vskip2em

\noindent \textbf{Abstract.} The purpose of this paper is to give an estimate of the $L^p$-norm of the Bergman projection on the Hartogs triangle.
\vskip1em

\noindent \textbf{Keywords:}  Bergman projection; Hartogs triangle; norm estimates;
\vskip1em
\noindent \textbf{AMS Subject Classifications:} primary 32A36, 47G10,   secondary 32W05, 32A25\\

\section{Introduction}
In this note we show an estimate of the $L^p$-norm of the Bergman projection on the Hartogs triangle, the pseudoconvex domain in $\mathbb{C}^2$ defined as
\begin{align*}
  \mathcal{H}=\left\{(z_1,z_2) \in \mathbb{C}^{2} \colon   |z_1| < |z_2| <1  \right\},
\end{align*}
for $4/3<p<4$. The Hartogs triangle has remarkable geometric and function-theoretic properties, and is a classical source of counterexamples in complex analysis. The boundary $b\mathcal{H}$ of the domain $\mathcal{H}$ has a serious
singularity at the point 0, where $b\mathcal{H}$ cannot even be represented as a graph of a continuous function. The closure $\overline{\mathcal{H}}$ does not have a Stein neighborhood basis. Instead, it has a nontrivial Nebenh\"{u}lle. The $\overline{\partial}$-problem on $\mathcal{H}$ is not globally regular (see \cite{CC91}). \newline
Let $\mathbb{D}$ denote the unit disc in $\mathbb{C}$, and $\nu$ the normalized Lebesgue volume measure on $\mathbb{D}$, while $\sigma$ is the normalized surface measure on its boundary $\mathbb{T}$. Let $dV$ denote the Lebesgue volume measure on $\mathcal{H}$. The space $L^p_h(\mathcal{H})$ consists of all holomorphic functions $f$ on $\mathcal{H}$, for which
\begin{align*}
  \|f\|_p:=\left\{ \int_{\mathcal{H}} |f(z)|^p d\mu(z)  \right\}^{1/p} <\infty,
\end{align*}
where $d\mu= \frac{ dV}{\pi^2}$.
The orthogonal projection operator $\mathbf{P} \colon L^2(\mathcal{H}) \rightarrow L^2_h(\mathcal{H})$ is the Bergman projection associated with the domain $\mathcal{H}$. It follows from the Riesz representation theorem that the Bergman projection is an integral operator with the kernel $K_{\mathcal{H}} (z, w)$ on $\mathcal{H} \times \mathcal{H}$, i.e. $\mathbf{P} f(z) = \int_{\mathcal{H}} K_{\mathcal{H}} (z, w)f(w)\,d\mu(w)$ for all $f \in L^2_h(\mathcal{H})$ (see \cite{Kra}, section 1 for more on this topic). It is well known that (see \cite{Ed})
\begin{align*}
  K_{\mathcal{H}} ((z_1,z_2), (w_1,w_2))=\frac{z_2 \overline{w}_2}{(1-z_2 \overline{w}_2)^2 (z_2 \overline{w}_2 - z_1 \overline{w}_1)^2}.
\end{align*}
 The Bergman projection is a central object in the study of analytic function spaces. It naturally relates to fundamental questions such as duality and harmonic conjugates, and it is also a building block for Toeplitz operators. Understanding its behaviour and estimating its size is therefore of vital importance on several occasions. There are several articles on Bergman projections. We refer the reader to the following articles and the references therein \cite{Chen,Dos,KP,LS,LS2,Liu,MN2,PS,Zey} for details of this interesting topic.
 In \cite{Ch}, Chakrabarti and Zeytuncu proved that the Bergman projection $\mathbf{P}$ is a bounded operator from $L^p(\mathcal{H})$ to $L^p_h(\mathcal{H})$ if and only if $4/3<p<4$. For the interested reader, we recommend \cite{Edholm} for more general result. A natural and interesting question is to determine the exact value of the $L^p$-operator norm $\|\mathbf{P}\|_p$ of this operator. This turns out to be a difficult task to accomplish, except for the trivial case when $p =2$.
\section{Preliminaries}
\subsection{The hypergeometric function}
If the real part of the complex number $z$ is positive ($\Re (z) > 0$), then the integral
\begin{align*}
  \Gamma(z)=\int_{0}^{\infty} x^{z-1} e^{-x} \, dx
\end{align*}
converges absolutely, and is known as the Euler integral of the second kind. The recurrence relation
\begin{align}\label{gamma}
  z\Gamma(z)=\Gamma(z+1)
\end{align}
can be used to uniquely extend the integral formulation for $\Gamma(z)$ to a meromorphic function defined for all complex numbers  $z$, except integers less than or equal to zero. Other important functional equations for the gamma function are Euler's reflection formula
\begin{align}\label{reflection}
  \Gamma(1-z)\Gamma(z)=\frac{\pi}{\sin (\pi z)},   \quad z \notin \mathbb{Z},
\end{align}
and the duplication formula
\begin{align}\label{dupl}
  \Gamma(z) \Gamma\left(z + \frac{1}{2} \right) = 2^{1-2z} \sqrt{\pi} \Gamma(2z)
\end{align}
discovered by Legendre (see \cite{HTF}, Chapter I for more on this topic).
Let $(a)_m = \frac{\Gamma(a+m)}{\Gamma(a)}$, that is,
$(a)_0 = 1$, $(a)_m = a(a + 1)\cdots (a + m - 1)$ for $m = 1, 2, \ldots$. The notation $(a)_m$ is called the Pochhammer symbol. The classical Euler-Gauss hypergeometric function is defined by
\begin{align*}
 F(a,b;c;z) =\sum_{n=0}^{\infty} \frac{(a)_n (b)_n}{(c)_n } \frac{z^n}{n!}.
\end{align*}
The series $F(a,b;c;z)$ converges when $|z| < 1$ and diverges when $|z| > 1$. For the readers's convenience, we list the properties of the function $F(a,b;c;z)$ that will be important for this paper
\begin{align}
  &F(a,b;c;1)=\frac{\Gamma(c) \Gamma(c-a-b)}{\Gamma(c-a) \Gamma(c-b)}, \quad \Re(c-a-b)>0.& \label{hyp1} \\
  &F(a,b;c;x)=(1-x)^{c-a-b} F(c-a,c-b;c;x). \label{hyp2}\\
  &F(a,b;c;x)=\frac{\Gamma(c)}{\Gamma(b) \Gamma(c-b)} \int_{0}^{1} t^{b-1} (1-t)^{c-b-1} (1-tx)^{-a} \, dt,& \label{hyp3}\\
  & \hskip26mm \Re(c)>\Re(b)>0; \, |\arg(1-x)|< \pi; \, x\neq 1.& \nonumber \\
  &\frac{d^k}{dx^k} F(a,b;c;x)=\frac{(a)_k (b)_k}{(c)_k} F(a+k,b+k;c+k;x), \quad k \in \mathbb{N}.& \label{hyp4}
\end{align}
We refer to \cite{HTF}, Chapter II for more properties of this function.
\subsection{Essential lemmas}
\begin{lem}\label{torus}
  For $a \in \mathbb{R}$, we have (see \cite{Liu})
  \begin{align*}
    \int_{\mathbb{T}} \frac{d\sigma(\zeta)}{|1- \langle z,\zeta \rangle |^{2a}}=F(a,a;1;|z|^2).
  \end{align*}
\end{lem}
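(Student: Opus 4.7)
The plan is to expand the integrand as an absolutely convergent double power series in $\zeta$ and $\bar\zeta$, and then integrate term by term against $d\sigma$, exploiting the orthogonality of monomials on the torus. Since $\zeta \in \mathbb{T}$, we have $\zeta\bar\zeta = 1$, and for $z$ in the open unit disc $\mathbb{D}$ the inner product $\langle z,\zeta\rangle = z\bar\zeta$ has modulus strictly less than one, so the factorisation
\[
  \frac{1}{|1-\langle z,\zeta\rangle|^{2a}}
  = (1-z\bar\zeta)^{-a}(1-\bar z\zeta)^{-a}
\]
is legitimate, and each factor admits a convergent binomial expansion $(1-w)^{-a}=\sum_{n\ge0}\frac{(a)_n}{n!}w^n$.

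Next I would substitute these two expansions and multiply them out, obtaining
\[
  \frac{1}{|1-\langle z,\zeta\rangle|^{2a}}
  = \sum_{n,m\ge 0} \frac{(a)_n (a)_m}{n!\,m!}\, z^n \bar z^m\, \bar\zeta^{\,n}\zeta^{m}.
\]
Because $|z|<1$, this double series converges absolutely and uniformly in $\zeta\in\mathbb{T}$, so Fubini's theorem lets us interchange sum and integration. The standard orthogonality relation on the normalised torus,
\[
  \int_{\mathbb{T}} \bar\zeta^{\,n}\zeta^{m}\, d\sigma(\zeta) = \delta_{nm},
\]
collapses the double sum to its diagonal $n=m$, yielding
\[
  \int_{\mathbb{T}}\frac{d\sigma(\zeta)}{|1-\langle z,\zeta\rangle|^{2a}}
  = \sum_{n\ge 0} \frac{(a)_n^2}{(n!)^2}\, |z|^{2n}.
\]

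Finally I would identify this series with $F(a,a;1;|z|^2)$: since $(1)_n = n!$, the definition of the hypergeometric function gives exactly $\sum_{n\ge 0}\frac{(a)_n (a)_n}{(1)_n}\frac{x^n}{n!}$, with $x=|z|^2$. No serious obstacle is anticipated; the only technical point is justifying the termwise integration, which is immediate from the absolute convergence granted by $|z|<1$. An alternative route through the Euler integral representation \eqref{hyp3} would require the restriction $0<a<1$, whereas the power-series argument handles arbitrary real $a$, which is why I prefer it.
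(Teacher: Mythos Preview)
Your argument is correct. Note, however, that the paper itself does not supply a proof of this lemma: it simply states the identity and refers the reader to \cite{Liu}. The binomial-expansion-plus-orthogonality route you outline is in fact the standard derivation (and is essentially what appears in Liu's paper), so there is nothing to contrast.
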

It follows from the above lemma and the formula $\sum\limits_{n=0}^{\infty}x^n=\frac{1}{1-x}$ that
\begin{align}
    \int_{\mathbb{T}} \frac{d\sigma(\zeta)}{|1- \langle z,\zeta \rangle |^2}=\frac{1}{1-|z|^2}.
  \end{align}
\begin{lem}\label{fre}
  Let $c >0$ and $t >-1$. We have (see \cite{Liu})
    \begin{align*}
      \sup_{z \in \mathbb{D}} \left\{ (1-|z|^2)^c \int_{\mathbb{D}} \frac{(1-|w|^2)^t  d\nu(w)}{|1- \langle z,w \rangle|^{2+t+c}} \right\}=\frac{\Gamma(t+1) \Gamma(c)}{\Gamma^2(\frac{2+t+c}{2})}.
    \end{align*}
\end{lem}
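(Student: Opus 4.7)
The plan is to reduce the integral on $\mathbb{D}$ to a one-dimensional integral by polar coordinates and Lemma \ref{torus}, identify the result as a Gauss hypergeometric function with shifted third parameter, and then apply the Euler transformation \eqref{hyp2} to extract the singular factor $(1-|z|^2)^{-c}$. After this extraction, the supremum becomes a monotone limit at $|z|=1$, which can be evaluated by the Gauss identity \eqref{hyp1}.

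First, I would write $w=r\zeta$ with $r\in[0,1)$ and $\zeta\in\mathbb{T}$, so that $d\nu(w)=2r\,dr\,d\sigma(\zeta)$. Applying Lemma \ref{torus} with $2a=2+t+c$ (i.e. $a=(2+t+c)/2$) to the inner integral, and using the rotational invariance of $d\sigma$ to absorb the radial factor into the scalar product, yields
\begin{align*}
  \int_{\mathbb{D}}\frac{(1-|w|^2)^t\,d\nu(w)}{|1-\langle z,w\rangle|^{2+t+c}}=\int_0^1 2r(1-r^2)^t\,F\!\left(a,a;1;r^2|z|^2\right)dr.
\end{align*}
Substituting $u=r^2$, expanding $F(a,a;1;u|z|^2)$ as a power series, and integrating termwise against $(1-u)^t$ via the beta integral $\int_0^1 u^n(1-u)^t\,du=\frac{n!\,\Gamma(t+1)}{\Gamma(n+t+2)}$, I would obtain, after using $\Gamma(n+t+2)=(t+2)_n\Gamma(t+2)$ and $\Gamma(t+2)=(t+1)\Gamma(t+1)$,
\begin{align*}
  \int_{\mathbb{D}}\frac{(1-|w|^2)^t\,d\nu(w)}{|1-\langle z,w\rangle|^{2+t+c}}=\frac{1}{t+1}\,F\!\left(a,a;t+2;|z|^2\right).
\end{align*}

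Next I would invoke the Euler transformation \eqref{hyp2} with third parameter $t+2$. Since $(t+2)-a-a=-c$, this extracts the factor $(1-|z|^2)^{-c}$ and replaces $a$ by $\alpha:=(t+2-c)/2$. Multiplying by $(1-|z|^2)^c$ cancels the singularity and produces
\begin{align*}
  (1-|z|^2)^c\int_{\mathbb{D}}\frac{(1-|w|^2)^t\,d\nu(w)}{|1-\langle z,w\rangle|^{2+t+c}}=\frac{1}{t+1}\,F\!\left(\alpha,\alpha;t+2;|z|^2\right).
\end{align*}
The coefficients $\frac{(\alpha)_n^2}{(t+2)_n\,n!}$ of this series are nonnegative for every real $\alpha$ (since $(\alpha)_n^2\ge 0$ and $t+2>0$), so the right-hand side depends only on $|z|^2$ and is nondecreasing on $[0,1)$. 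Consequently the supremum over $\mathbb{D}$ coincides with the monotone limit as $|z|\to 1^-$.

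Finally, because $c>0$ the parameter condition $(t+2)-2\alpha=c>0$ in \eqref{hyp1} is satisfied, so by the Gauss summation formula
\begin{align*}
  F(\alpha,\alpha;t+2;1)=\frac{\Gamma(t+2)\,\Gamma(c)}{\Gamma\!\left(\tfrac{t+2+c}{2}\right)^{2}}.
\end{align*}
Combining with the prefactor $\frac{1}{t+1}$ and using $\Gamma(t+2)=(t+1)\Gamma(t+1)$ gives the stated value. The only step that genuinely needs care is the termwise integration in the series expansion, but the nonnegativity of all coefficients makes this a direct application of monotone convergence; the same nonnegativity is what identifies the supremum with the boundary limit, so I do not foresee any substantive obstacle.
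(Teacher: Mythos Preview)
The paper does not supply a proof of this lemma; it simply cites \cite{Liu}. Your argument is correct and self-contained: the polar-coordinate reduction via Lemma~\ref{torus} gives $\frac{1}{t+1}F(a,a;t+2;|z|^2)$ with $a=(2+t+c)/2$; the Euler transformation \eqref{hyp2} extracts exactly the factor $(1-|z|^2)^{-c}$ because $(t+2)-2a=-c$; the remaining series $F(\alpha,\alpha;t+2;|z|^2)$ with $\alpha=(t+2-c)/2$ has nonnegative coefficients (since $(\alpha)_n^2\ge0$), so the supremum is the boundary limit; and the Gauss summation \eqref{hyp1} applies because $(t+2)-2\alpha=c>0$, giving $\Gamma(t+2)\Gamma(c)/\Gamma\!\left(\tfrac{2+t+c}{2}\right)^2$, which after dividing by $t+1$ matches the claim. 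The termwise-integration and supremum-as-limit steps are both justified by nonnegativity, as you note.
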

\begin{lem}\label{mlem}
  Let $h((z_1,z_2))=(|z_2|^2 - |z_1|^2)(1-|z_2|^2)$. For $1>t>1/2$, we have
    \begin{align*}
      \sup_{(z_1,z_2) \in \mathcal{H}} \left\{ h((z_1,z_2))^t \int_{\mathcal{H}} \frac{|z_2 \overline{w}_2|  h((w_1,w_2))^{-t}  d\mu(w_1,w_2) }{|1-z_2 \overline{w}_2|^2 |z_2 \overline{w}_2 - z_1 \overline{w}_1|^2  }  \right\} = \Gamma^2(1-t)\Gamma^2(t).
    \end{align*}
\end{lem}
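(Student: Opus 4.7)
The plan is to linearise the Hartogs triangle via the biholomorphism $\Phi\colon\mathcal{H}\to\mathbb{D}\times\mathbb{D}^{*}$, $\Phi(z_{1},z_{2})=(\eta,s)=(z_{1}/z_{2},z_{2})$, and to write $(w_{1},w_{2})=(s'\eta',s')$ in the inner integral.  Using $dV(w)=|s'|^{2}\,dV(\eta')\,dV(s')$ together with the identities $|1-z_{2}\overline{w}_{2}|=|1-s\overline{s'}|$, $|z_{2}\overline{w}_{2}-z_{1}\overline{w}_{1}|=|s||s'||1-\eta\overline{\eta'}|$ and $h(z)=|s|^{2}(1-|\eta|^{2})(1-|s|^{2})$, the integrand factors cleanly and the quantity under the supremum becomes $I(z)=A(\eta)\,B(s)$, with
\begin{align*}
A(\eta)&=(1-|\eta|^{2})^{t}\int_{\mathbb{D}}\frac{(1-|\eta'|^{2})^{-t}}{|1-\eta\overline{\eta'}|^{2}}\,d\nu(\eta'),\\
B(s)&=|s|^{2t-1}(1-|s|^{2})^{t}\int_{\mathbb{D}}\frac{|s'|^{1-2t}(1-|s'|^{2})^{-t}}{|1-s\overline{s'}|^{2}}\,d\nu(s'),
\end{align*}
where $d\nu=dV/\pi$ is the normalised Lebesgue measure on $\mathbb{D}$.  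Because $A,B\ge 0$ depend on disjoint variables, $\sup_{z}I(z)=(\sup_{\eta}A)(\sup_{s}B)$, and the problem splits into two one-variable suprema.

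The factor $A(\eta)$ is the content of Lemma \ref{fre} applied with ``exponent'' $-t$ and ``constant'' $c=t$ (admissible since $1/2<t<1$); it yields $\sup_{\eta}A(\eta)=\Gamma(1-t)\Gamma(t)$, attained as $|\eta|\to 1$.  For $B(s)$ I would integrate out the angle in $s'=re^{i\theta}$ by the one-dimensional case of Lemma \ref{torus}, then substitute $u=r^{2}$ to reduce the inner integral to $\int_{0}^{1}u^{1/2-t}(1-u)^{-t}/(1-|s|^{2}u)\,du$.  Euler's integral representation \eqref{hyp3} identifies this with $\frac{\Gamma(3/2-t)\Gamma(1-t)}{\Gamma(5/2-2t)}F(1,\tfrac{3}{2}-t;\tfrac{5}{2}-2t;|s|^{2})$, and the Euler transformation \eqref{hyp2} absorbs the prefactor $(1-|s|^{2})^{t}$ to give
$$B(s)=|s|^{2t-1}\,\frac{\Gamma(3/2-t)\Gamma(1-t)}{\Gamma(5/2-2t)}\,F\!\left(\tfrac{3}{2}-2t,\,1-t;\,\tfrac{5}{2}-2t;\,|s|^{2}\right).$$
Since the Gauss parameter $c-a-b=t$ is positive, \eqref{hyp1} evaluates the hypergeometric at $1$ and the $\Gamma$-factors collapse to $\lim_{|s|\to 1}B(s)=\Gamma(1-t)\Gamma(t)$.

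The main technical difficulty is to upgrade this limit to the pointwise bound $B(s)\le\Gamma(1-t)\Gamma(t)$ for all $s\in\mathbb{D}^{*}$.  When $t\in(1/2,3/4]$ both hypergeometric parameters $3/2-2t$ and $1-t$ are non-negative, every Taylor coefficient of $F(3/2-2t,1-t;5/2-2t;\cdot)$ is non-negative, the hypergeometric factor is non-decreasing in $|s|^{2}$, and together with $|s|^{2t-1}\le 1$ this immediately gives the bound.  For $t\in(3/4,1)$ the coefficient $3/2-2t$ is negative, the Taylor coefficients change sign and monotonicity is lost; this regime is where the real obstacle lies.  Here I would rely on the Euler integral representation
$$B(s)=|s|^{2t-1}\int_{0}^{1}v^{-t}(1-v)^{1/2-t}(1-v|s|^{2})^{2t-3/2}\,dv$$
and compare directly with the limiting integral $\int_{0}^{1}v^{-t}(1-v)^{t-1}\,dv=\Gamma(1-t)\Gamma(t)$, controlling the integrand by balancing the damping $|s|^{2t-1}\le 1$ against the growth $(1-v|s|^{2})^{2t-3/2}\ge(1-v)^{2t-3/2}$ (which holds precisely because $2t-3/2>0$ in this range).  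Multiplying the two suprema then yields the announced value $\Gamma^{2}(1-t)\Gamma^{2}(t)$.
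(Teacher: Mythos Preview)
Your overall strategy coincides with the paper's: linearise via $\Phi(z_1,z_2)=(z_1/z_2,z_2)$, use Lemma~\ref{fre} for the $\eta$-factor, and reduce the $s$-factor to the hypergeometric expression
\[
B(s)=|s|^{2t-1}\,\frac{\Gamma(3/2-t)\Gamma(1-t)}{\Gamma(5/2-2t)}\,F\!\bigl(\tfrac32-2t,\,1-t;\,\tfrac52-2t;\,|s|^{2}\bigr),
\]
which (after Legendre duplication) is exactly the paper's $I(z_2)$. Your clean product factorisation $I(z)=A(\eta)B(s)$ is a nice presentational improvement on the paper's ``take the sup over $z_1$ first'' manoeuvre, but mathematically it is the same computation. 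The case $t\in(1/2,3/4]$ is handled correctly, for the same reason as in the paper: all Taylor coefficients of the hypergeometric factor are non-negative.

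The genuine gap is the range $t\in(3/4,1)$. Your proposed argument---``balance the damping $|s|^{2t-1}\le1$ against the growth $(1-v|s|^{2})^{2t-3/2}\ge(1-v)^{2t-3/2}$''---is not a proof, and in fact cannot be made into a pointwise integrand comparison: you would need
\[
\lambda^{t-1/2}(1-v)^{1/2-t}(1-\lambda v)^{2t-3/2}\le (1-v)^{t-1},\qquad\text{i.e.}\qquad \lambda^{t-1/2}\le\Bigl(\tfrac{1-v}{1-\lambda v}\Bigr)^{2t-3/2},
\]
but the right-hand side tends to $0$ as $v\to1^{-}$ while the left-hand side is a fixed positive number, so the inequality fails near $v=1$. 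The cancellation is therefore global, not pointwise, and something more is required. The paper closes this gap by differentiating
\[
f(\lambda)=\lambda^{t-1/2}F\!\bigl(\tfrac32-2t,\,1-t;\,\tfrac52-2t;\,\lambda\bigr)
\]
via~\eqref{hyp4}, writing $f'(\lambda)=\lambda^{t-3/2}g(\lambda)$ with
\[
g(\lambda)=\bigl(t-\tfrac12\bigr)F\!\bigl(\tfrac32-2t,1-t;\tfrac52-2t;\lambda\bigr)+\lambda\,F\!\bigl(\tfrac52-2t,2-t;\tfrac72-2t;\lambda\bigr),
\]
and observing that $g(0)=t-\tfrac12>0$ and $g'(\lambda)>0$ on $(0,1)$, so that $f$ is increasing and its supremum is the limit at $\lambda=1$. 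You should replace the ``balancing'' sentence with this derivative computation (or an equivalent argument) to complete the proof.
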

\begin{proof}
  Fix $1>t>1/2$ and denote $$C(t):= \sup_{(z_1,z_2) \in \mathcal{H}} \left\{ h((z_1,z_2))^t \int_{\mathcal{H}} \frac{|z_2 \overline{w}_2|  h((w_1,w_2))^{-t}  d\mu(w_1,w_2) }{|1-z_2 \overline{w}_2|^2 |z_2 \overline{w}_2 - z_1 \overline{w}_1|^2 }  \right\}.$$ Then the $C(t)$  equals
  \begin{align*}
    \sup_{(z_1,z_2) \in \mathcal{H}} \left\{ (1-|z_2|^2)^t \int_{\mathbb{D}^{*}} \frac{|z_2 \overline{w}_2| (1-|w_2|^2)^{-t} }{\pi^2 |1-z_2 \overline{w}_2|^2} \left[ \int_W A\, dV(w_1) \right]\, dV(w_2) \right\},
  \end{align*}
where $\mathbb{D}^{*}:=\{w_2 \colon 0 < |w_2| < 1\}$, $W:=\{w_1 \colon |w_1| < |w_2|\}$ and
$$A:=\frac{(|z_2|^2 - |z_1|^2)^t}{|z_2|^2 |w_2|^{2+2t}} \left( 1 - \left| \frac{w_1}{w_2} \right|^2  \right)^{-t} \left| 1- \frac{z_1 \overline{w}_1}{z_2 \overline{w}_2} \right|^{-2}.$$
Now we focus on the integral in brackets. Making the substitution $u=\frac{w_1}{w_2}$ we have
\begin{align*}
  \int_W A\, dV(w_1) = \int_{\mathbb{D}}  \frac{(|z_2|^2 - |z_1|^2)^t}{|z_2|^2 |w_2|^{2t}} \left( 1 - \left| u \right|^2  \right)^{-t} \left| 1- \frac{z_1}{z_2} \overline{u} \right|^{-2} \, dV(u).
\end{align*}
For fixed $z_2 \in \mathbb{D}^{*}$, by Lemma \ref{fre}, we have
\begin{align*}
  \sup_{|z_1|<|z_2|} \left\{ \int_W A\, dV(w_1) \right\} = \frac{\pi \Gamma(1-t)\Gamma(t)}{|w_2|^{2t} |z_2|^{2-2t}}.
\end{align*}
Therefore
  \begin{align*}
    C(t) = \sup_{z_2 \in \mathbb{D}^{*}} \left\{ \Gamma(1-t)\Gamma(t)  \int_{\mathbb{D}^{*}} \frac{(1-|z_2|^2)^t |z_2|^{2t-1} |w_2|^{1-2t}  }{\pi (1-|w_2|^2)^{t} |1-z_2 \overline{w}_2|^2  } \, dV(w_2) \right\}.
  \end{align*}
For $z_2 \in \mathbb{D}^{*}$, denote $$ I(z_2)= \int_{\mathbb{D}^{*}} \frac{(1-|z_2|^2)^t |z_2|^{2t-1} |w_2|^{1-2t}  }{\pi (1-|w_2|^2)^{t} |1-z_2 \overline{w}_2|^2  } \, dV(w_2). $$
Introducing polar coordinate in $w_2=r e^{i\zeta}$ variable we have
\begin{align*}
  I(z_2)=2(1-|z_2|^2)^t |z_2|^{2t-1}  \int_{0}^{1} \left[\int_{\mathbb{T}} \frac{d\sigma(\zeta)}{|1-\langle rz_2,\zeta \rangle|^2}  \right] \frac{r^{2-2t} \, dr} {(1-r^2)^{t}}
\end{align*}
Next, by Lemma \ref{torus}
 \begin{align*}
  I(z_2)=2(1-|z_2|^2)^t |z_2|^{2t-1}  \int_{0}^{1}  \frac{r^{2-2t} (1-r^2)^{-t}}{1-r^2|z_2|^2} \,dr
\end{align*}
Hence, by (\ref{hyp3}) we obtain
\begin{align*}
  I(z_2)=(1-|z_2|^2)^t |z_2|^{2t-1}  \frac{2^{2t-1} \sqrt{\pi} \Gamma(2-2t)}{\Gamma\left(\frac{5}{2}-2t \right)} F\left(1,\frac{3}{2}-t; \frac{5}{2} -2t; |z_2|^2 \right).
\end{align*}
Finally, by (\ref{hyp2})
 \begin{align*}
  I(z_2)=   \frac{2^{2t-1} \sqrt{\pi} \Gamma(2-2t)}{\Gamma\left(\frac{5}{2}-2t \right)} |z_2|^{2t-1} F\left(\frac{3}{2}-2t,1-t; \frac{5}{2} -2t; |z_2|^2 \right).
\end{align*}
If $3/2-2t>0 \Leftrightarrow 3/4>t$, then $|z_2|^{2t-1} F\left(\frac{3}{2}-2t,1-t; \frac{5}{2} -2t; |z_2|^2 \right)$ is the increasing function of $|z_2|$ ($|z_2| \in [0,1)$), since its Taylor coefficients are all positive. Therefore, by (\ref{hyp1})
  \begin{align*}
    C(t) =  \Gamma(1-t)\Gamma(t)  \frac{2^{2t-1} \sqrt{\pi} \Gamma(2-2t)}{\Gamma\left(\frac{5}{2}-2t \right)}  \frac{\Gamma(t) \Gamma(2-2t)}{\Gamma\left(\frac{3}{2}-t \right)}.
  \end{align*}
Using the duplication formula (\ref{dupl}), we get
 \begin{align*}
    C(t) = \Gamma^2(1-t)\Gamma^2(t).
  \end{align*}
In the case when $3/2-2t \leq 0$ we consider the function
$$f(\lambda):=\lambda^{t-1/2} F\left(\frac{3}{2}-2t,1-t; \frac{5}{2} -2t; \lambda \right), \quad \lambda \in [0,1].$$
Applying differentiation formula of the hypergeometric function (\ref{hyp4}), we have
\begin{align*}
  f'(\lambda)=\lambda^{t-3/2} g(\lambda),
\end{align*}
where
\begin{align*}
   g(\lambda)=&\left(t-\frac{1}{2}\right) F\left(\frac{3}{2}-2t,1-t; \frac{5}{2} -2t; \lambda \right) \\ &+ \lambda F\left(\frac{5}{2}-2t,2-t; \frac{7}{2} -2t; \lambda \right).
\end{align*}
Since $g(0)=t-1/2>0$ and  $g'(\lambda) >0 $ for $\lambda \in (0,1)$, we obtain that the function $f$ is an increasing function on an interval $[0,1]$. Therefore the conclusion about a constant $C(t)$ is the same as in the case when $3/2-2t>0$ which completes the proof.
\end{proof}
\begin{lem}\label{cal}\cite{Liu}
Let $a, b, c \in \mathbb{R}$ and $t >-1$. The identity
\begin{align*}
  \int_{\mathbb{D}}& \frac{(1-|\xi|^2)^t d\nu(\xi)}{(1- \langle z,\xi \rangle)^a (1- \langle w,\xi \rangle)^b (1- \langle \xi,w \rangle)^c} \\ &=\frac{\Gamma(1+t)}{\Gamma(2+t)} \sum_{j=0}^{\infty} \frac{(a)_j (c)_j}{(2+t)_j j!} F\left(b,c+j;2+t+j;|w|^2\right) \langle z,w\rangle^j
  \end{align*}
holds for any $z,w \in \mathbb{D}$.
\end{lem}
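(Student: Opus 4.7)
The plan is to expand each of the three denominators as a binomial series and reduce the integral to a monomial calculation in polar coordinates. Specifically, for $\xi\in\mathbb{D}$ one has the absolutely convergent expansions
\begin{align*}
(1-\langle z,\xi\rangle)^{-a} = \sum_{j=0}^\infty \frac{(a)_j}{j!}(z\bar\xi)^j, \qquad (1-\langle \xi,w\rangle)^{-c} = \sum_{k=0}^\infty \frac{(c)_k}{k!}(\xi\bar w)^k,
\end{align*}
and similarly $(1-\langle w,\xi\rangle)^{-b}=\sum_{m\geq 0}\frac{(b)_m}{m!}(w\bar\xi)^m$. I would substitute these three series into the integrand and interchange sum and integral, justifying the interchange by truncating to polynomials on the subdisc $|\xi|\leq r<1$ and letting $r\to 1^-$, using $t>-1$ to control the boundary behaviour.

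Next, I would pass to polar coordinates $\xi=\rho e^{i\theta}$ with $d\nu(\xi)=\pi^{-1}\rho\,d\rho\,d\theta$. The product $(z\bar\xi)^j(\xi\bar w)^k(w\bar\xi)^m$ contributes the phase $e^{i(k-j-m)\theta}$, so orthogonality of the characters collapses the triple sum to the diagonal $m=k-j$, forcing $k\geq j$. The remaining radial integral is a Beta integral,
\begin{align*}
2\int_0^1 \rho^{2k+1}(1-\rho^2)^t\, d\rho = \frac{k!\,\Gamma(t+1)}{\Gamma(k+t+2)},
\end{align*}
leaving a double sum in $j$ and $k$ whose summand involves $(a)_j$, $(c)_k$, $(b)_{k-j}$, $1/\Gamma(k+t+2)$, the monomial $(z\bar w)^j$, and the factor $|w|^{2(k-j)}$.

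To recognise the hypergeometric series, I would reindex with $m=k-j$ and invoke the Pochhammer identities $(c)_{j+m}=(c)_j(c+j)_m$ together with $\Gamma(j+m+t+2)=\Gamma(t+2)\,(2+t)_j\,(2+t+j)_m$. The inner sum over $m$ then becomes exactly $F\bigl(b,c+j;\,2+t+j;\,|w|^2\bigr)$, while the outer sum over $j$ gives the claimed series, with the prefactor $\Gamma(1+t)/\Gamma(2+t)$ arising from the $\Gamma(t+1)/\Gamma(t+2)$ produced by the Beta integral after pulling $\Gamma(t+2)$ out through the Pochhammer decomposition.

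The main obstacle is bookkeeping rather than any deep analytic step: keeping the three running indices, the shift in $(c)_{j+m}$, and the shift of the Gamma factor aligned so that the hypergeometric pattern emerges cleanly is where errors are most likely. A secondary technical point is the convergence justification for exchanging the triple sum with the integral, but since the series converge uniformly on compact subdiscs and $(1-|\xi|^2)^t$ is integrable for $t>-1$, exhausting $\mathbb{D}$ by subdiscs and applying dominated convergence closes this gap.
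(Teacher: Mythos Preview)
Your proposal is correct. The paper does not give its own proof of this lemma; it is quoted from \cite{Liu} and stated without argument. Your approach---binomial expansion of each factor, angular orthogonality to collapse the triple sum to $m=k-j$, a Beta-integral radial computation, and the Pochhammer factorisations $(c)_{j+m}=(c)_j(c+j)_m$ and $\Gamma(j+m+t+2)=\Gamma(t+2)(2+t)_j(2+t+j)_m$ to reveal the inner hypergeometric---is the standard derivation and matches what one finds in Liu's paper. The convergence justification you sketch (uniform convergence on $|\xi|\le r$ together with the integrable majorant $C(1-|\xi|^2)^t$, since the three denominators are bounded away from zero on $\overline{\mathbb{D}}$ for fixed $z,w\in\mathbb{D}$) is adequate.
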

\begin{lem}\label{rozk}\cite{Liu}
Let $1<p<\infty$ and
\begin{align*}
  &\Psi_{\xi}(z):=\Gamma(2/p)\Gamma(2/q) \sum_{k=0}^{\infty} \epsilon_k \langle z, \xi \rangle^k, \\
  &\Upsilon_{\xi}(z):= \sum_{k=0}^{\infty} a_k(\xi) \langle z, \xi \rangle^k,
\end{align*}
where
\begin{align*}
  & \epsilon_k:= \frac{(2/p)_k}{k!} \left( \frac{\Gamma(k+2) \Gamma(k+1)}{ \Gamma(k+1+2/q) \Gamma(k+2/p) } -1 \right), \\
  & a_k(\xi):= \frac{(1)_k}{k!} \left( F(2/p-1,k+1;k+2;|\xi|^2) - \frac{\Gamma(2/q) \Gamma(k+2) }{\Gamma(k+1+2/q) } \right).
\end{align*}
Then we have
\begin{align*}
  &\sup_{\xi \in \mathbb{D}}\left\{\int_{\mathbb{D}} |\Psi_{\xi}(z)|^p \, d\nu(z) \right\} < \infty, \\
  &\sup_{\xi \in \mathbb{D}} \left\{ \int_{\mathbb{D}} |\Upsilon_{\xi}(z)|^p \, d\nu(z) \right\} < \infty.
\end{align*}
\end{lem}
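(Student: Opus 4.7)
The overall strategy is to bypass the series definitions and either bound the coefficients asymptotically (for $\Psi_\xi$) or collapse the series into a compact integral representation (for $\Upsilon_\xi$), after which the $L^p$ bound reduces to a pointwise estimate and a one-variable integral.

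For $\Psi_\xi$ I would proceed by Stirling-type asymptotics. Because $2/p+2/q=2$, the ratio
$$\frac{\Gamma(k+2)\Gamma(k+1)}{\Gamma(k+1+2/q)\Gamma(k+2/p)}$$
has leading power $k^{2-2/p-2/q}=k^{0}=1$, so the standard expansion $\Gamma(k+a)/\Gamma(k+b)=k^{a-b}(1+O(1/k))$ yields that this ratio equals $1+O(1/k)$. Combined with $(2/p)_k/k!=O(k^{2/p-1})$, this gives $|\epsilon_k|=O(k^{2/p-2})$. Using $|\xi|\le 1$, I then obtain the $\xi$-uniform pointwise bound
$$|\Psi_\xi(z)|\;\lesssim\;\sum_{k\ge 1} k^{2/p-2}|z|^k,$$
and the classical asymptotics of such a power series near $|z|=1$ produce three regimes: bounded for $p>2$, logarithmic for $p=2$, and $(1-|z|)^{1-2/p}$ for $1<p<2$. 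Taking the $p$-th power and integrating, the worst case gives $\int_{\mathbb D}(1-|z|)^{p-2}\, d\nu(z)<\infty$ exactly because $p>1$, so this side is done.

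For $\Upsilon_\xi$ I first note that $(1)_k/k!=1$, and that the subtracted constant equals $F(2/p-1,k+1;k+2;1)$ via (\ref{hyp1}) (legitimate since $c-a-b=2/q>0$). Applying (\ref{hyp3}) to both values (with $c-b-1=0$, which kills the $(1-t)$-factor) yields
$$a_k(\xi)=(k+1)\int_0^1 t^k\bigl[(1-t|\xi|^2)^{1-2/p}-(1-t)^{1-2/p}\bigr]\,dt,$$
and swapping summation with the $t$-integral, justified by absolute convergence for $|z\xi|<1$, together with $\sum_{k\ge 0}(k+1)x^k=(1-x)^{-2}$, collapses the defining series to
$$\Upsilon_\xi(z)=\int_0^1 \frac{(1-t|\xi|^2)^{1-2/p}-(1-t)^{1-2/p}}{(1-t\langle z,\xi\rangle)^2}\,dt.$$
At $p=2$ the numerator vanishes identically, so that case is trivial.

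To estimate this integral, set $s=1-2/p$. For $p>2$ the subadditivity of $x\mapsto x^s$ on $(0,1)$ gives $|(1-t|\xi|^2)^s-(1-t)^s|\le t^s(1-|\xi|^2)^s$, while for $1<p<2$ the mean value theorem applied on $[1-t,1-t|\xi|^2]$ yields $|(1-t|\xi|^2)^s-(1-t)^s|\le |s|(1-t)^{s-1}t(1-|\xi|^2)$. In either case, Minkowski's integral inequality followed by the standard disc estimate $\int_{\mathbb D}|1-t\bar\xi z|^{-2p}\,d\nu(z)\lesssim (1-t^2|\xi|^2)^{2-2p}$ reduces matters to a one-dimensional integral of $t$-powers against $(1-t^2|\xi|^2)^{2/p-2}$. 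The main obstacle, and the step demanding most care, is checking that after integrating over $t$ the power of $(1-|\xi|^2)$ produced near the endpoint $t=1$ exactly matches the prefactor and yields a bound independent of $\xi$: for $p>2$ the $t$-integral blows up like $(1-|\xi|^2)^{2/p-1}$, which is neutralized by the $(1-|\xi|^2)^{1-2/p}$ coming from the subadditivity bound, and a symmetric cancellation handles $1<p<2$ via the mean-value bound.
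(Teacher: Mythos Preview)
The paper itself supplies no proof of this lemma; it is quoted verbatim from Liu \cite{Liu}, so there is nothing to compare against at the level of argument. Your strategy for $\Psi_\xi$ is sound: the Stirling cancellation $\Gamma(k+2)\Gamma(k+1)/\bigl(\Gamma(k+1+2/q)\Gamma(k+2/p)\bigr)=1+O(1/k)$ is exactly the point, and the resulting coefficient bound $|\epsilon_k|=O(k^{2/p-2})$ does give a $\xi$-uniform majorant in $L^p(\mathbb D)$ for every $1<p<\infty$. Your integral representation for $\Upsilon_\xi$ is also correct, and the $p\ge 2$ estimate goes through as you describe.

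The gap is in the range $1<p<2$. With $s=1-2/p\in(-1,0)$ the mean-value bound you quote,
\[
\bigl|(1-t|\xi|^2)^{s}-(1-t)^{s}\bigr|\le |s|\,(1-t)^{s-1}\,t\,(1-|\xi|^2),
\]
is valid, but after Minkowski it produces the one-dimensional integral
\[
(1-|\xi|^2)\int_0^1 t\,(1-t)^{s-1}\,(1-t^2|\xi|^2)^{2/p-2}\,dt,
\]
and here $s-1<-1$, so $(1-t)^{s-1}$ is \emph{not integrable at $t=1$}; the integral diverges for every fixed $\xi$, and no ``symmetric cancellation'' with the prefactor can repair an identically infinite bound. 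What is missing is a splitting of the $t$-integral: on $\{1-t\ge 1-|\xi|^2\}$ your mean-value estimate is usable and the resulting integral is $\lesssim (1-|\xi|^2)\int_{1-|\xi|^2}^{1}u^{-2}\,du\lesssim 1$ after the substitution $u=1-t$, while on the complementary piece $\{1-t<1-|\xi|^2\}$ one should instead use the trivial bound $|(1-t|\xi|^2)^{s}-(1-t)^{s}|\le(1-t)^{s}$, which is integrable there and again gives a contribution $\lesssim 1$. With this splitting (or an equivalent interpolation between the two bounds) your scheme for $\Upsilon_\xi$ becomes correct for all $1<p<\infty$.
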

\section{Main result}
Our  main result reads as follows.
\begin{thm}[Main Theorem]
  For $4/3<p<4$, we have
   \begin{align}\label{main}
    \Gamma^2\left(\frac{2}{p}\right)\Gamma^2\left(\frac{2}{q}\right) \leq \|\mathbf{P}\|_p \leq  \Gamma^2\left(1-\frac{2}{p}\right) \Gamma^2\left(\frac{2}{p}\right),
   \end{align}
where $q:=\frac{p}{p-1}$ is the conjugate exponent of $p$.
\end{thm}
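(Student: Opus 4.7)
The proof splits into the upper and lower bounds, and both exploit a ``product'' structure of $K_\mathcal{H}$ made explicit in the proof of Lemma~\ref{mlem}: after the substitution $u=w_1/w_2$, $v=z_1/z_2$, the kernel factors into a piece depending on $(z_2,w_2)$ times the standard disc Bergman kernel in $(v,u)$. This is the reason the estimates come out squared compared with the classical one-disc setting.

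\textbf{Upper bound.} I plan to apply Schur's test to the integral operator with nonnegative kernel $|K_\mathcal{H}|$, using the test function $\phi(z)=h(z)^{-s}$ with $h$ as in Lemma~\ref{mlem} and the distinguished exponent $s=2/(pq)$, so that $\phi^{q}=h^{-2/p}$ and $\phi^{p}=h^{-2/q}$. Lemma~\ref{mlem} with $t=2/p$ gives
\[
\int_\mathcal{H} |K_\mathcal{H}(z,w)|\,\phi(w)^{q}\,d\mu(w)\leq \Gamma^2(1-2/p)\,\Gamma^2(2/p)\,\phi(z)^{q},
\]
valid when $2/p\in(1/2,1)$, i.e.\ $p\in(2,4)$; the symmetry $|K_\mathcal{H}(z,w)|=|K_\mathcal{H}(w,z)|$ together with Lemma~\ref{mlem} at $t=2/q$ gives the corresponding bound for the second leg of Schur's test, valid when $p\in(4/3,2)$. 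Schur's test then delivers $\|\mathbf{P}\|_p\leq C_1^{1/q}C_2^{1/p}$, and the reflection formula~\eqref{reflection} together with $2/p+2/q=2$ shows that both $C_1$ and $C_2$ equal $[\pi/\sin(2\pi/p)]^2=\Gamma^2(1-2/p)\Gamma^2(2/p)$; because $1/p+1/q=1$ the product telescopes to precisely the stated upper bound. Only one of the two Lemma~\ref{mlem} applications is legitimate at any fixed $p$, so the remaining half of the interval $(4/3,4)$ has to be recovered through the self-adjointness identity $\|\mathbf{P}\|_p=\|\mathbf{P}\|_q$.

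\textbf{Lower bound.} I plan to build a family of test functions $f_\xi$ on $\mathcal{H}$, parametrized by $\xi\in\mathbb{D}$, and show that $\|\mathbf{P}f_\xi\|_p/\|f_\xi\|_p\to\Gamma^2(2/p)\Gamma^2(2/q)$ as $|\xi|\to 1$. The natural choice is a product-type function: a disc ``Liu'' test in the $u$-slot (a combination of $(1-|u|^2)^{\alpha}$ with factors $(1-u\bar\xi)^{-\beta}(1-\bar u\xi)^{-\gamma}$ for exponents tied to $2/p$ and $2/q$) and its analogue in the $w_2$-slot. After changing variables $u=w_1/w_2$, evaluation of $\mathbf{P}f_\xi$ reduces to (a) a disc integral in $u$, which by Lemma~\ref{cal} expands in a series whose leading coefficient produces a factor $\Gamma(2/p)\Gamma(2/q)$ with a remainder of the $\Upsilon_\xi$ type, and (b) a one-variable integral in $w_2$ handled by the hypergeometric identities~\eqref{hyp1}--\eqref{hyp3} as in the proof of Lemma~\ref{mlem}, with a remainder of the $\Psi_\xi$ type. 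Lemma~\ref{rozk} is exactly the statement that the remainders stay uniformly bounded in $L^p$; dividing by $\|f_\xi\|_p$ they contribute $o(1)$, while the two main terms multiply to $\Gamma^2(2/p)\Gamma^2(2/q)$.

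\textbf{Main obstacle.} The harder part will be the lower bound: one must design $f_\xi$ so that the two ``disc'' contributions each produce exactly $\Gamma(2/p)\Gamma(2/q)$ at leading order, then verify that the $\Psi_\xi$- and $\Upsilon_\xi$-type remainders neither cancel nor reinforce the main term in the limit $|\xi|\to 1$, and finally compute (or tightly bound) $\|f_\xi\|_p$ explicitly. On the upper-bound side, the only real delicacy is covering the entire range $4/3<p<4$ by combining Lemma~\ref{mlem}'s restricted range with the duality identity $\|\mathbf{P}\|_p=\|\mathbf{P}\|_q$.
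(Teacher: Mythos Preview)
Your plan is essentially the paper's own proof: Schur's test with the weight $h=\big[(|z_2|^2-|z_1|^2)(1-|z_2|^2)\big]^{-2/(pq)}$ together with Lemma~\ref{mlem} for the upper bound, and a product-type family of test functions analysed via Lemmas~\ref{cal} and~\ref{rozk} for the lower bound. Two remarks are in order.

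\textbf{Upper bound.} You correctly notice that Lemma~\ref{mlem} only applies for $t\in(1/2,1)$, so at most one of the two Schur legs ($t=2/p$ or $t=2/q$) is covered at any fixed $p\neq 2$. Your proposed cure, duality, does not actually close this: Schur's test needs \emph{both} legs simultaneously, and since $2/p+2/q=2$ the two exponents can never lie in $(1/2,1)$ together. For $p\in(2,4)$ the second leg in fact diverges (after the substitution $u=z_1/z_2$ one meets $\int_{\mathbb D}(1-|u|^2)^{-2/q}|1-u\bar v|^{-2}\,d\nu(u)$ with $2/q>1$), so the test never closes; invoking $\|\mathbf P\|_p=\|\mathbf P\|_q$ merely swaps which leg diverges. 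The paper glosses over precisely this point---it asserts $C(p)=C(q)=\Gamma^2(1-2/p)\Gamma^2(2/p)$ ``from Lemma~\ref{mlem}'' and treats $p<2$ ``by duality''---so your outline is faithful to the paper here, but you should be aware that neither argument, as written, verifies the second Schur inequality.

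\textbf{Lower bound.} Your sketch matches the paper's method. One small difference: the paper uses a \emph{two}-parameter family
\[
f_{(\xi_1,\xi_2)}(z_1,z_2)=\frac{(1-\xi_2\bar z_2)^{1-2/p}}{z_2(1-z_2\bar\xi_2)}\cdot\frac{\bigl(1-\tfrac{\xi_1}{\xi_2}\tfrac{\bar z_1}{\bar z_2}\bigr)^{1-2/p}}{1-\tfrac{z_1}{z_2}\tfrac{\bar\xi_1}{\bar\xi_2}},
\]
with no $(1-|u|^2)^\alpha$ weight, and sends $|\xi_2|\to 1^-$ and $|\xi_1/\xi_2|\to 1^-$ independently; this is what produces the product of two logarithms in $\|f_{(\xi_1,\xi_2)}\|_p^p$ used to kill the eight $\Psi$/$\Upsilon$ cross-terms one by one. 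A single-parameter $\xi$ as you wrote can be made to work, but the two-parameter version is what the paper actually carries out.
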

\begin{proof}
  First we prove the upper estimate in (\ref{main}). To do so, we use the well known Schur's test (see, for instance, \cite{Zhu} Theorem 3.6).
\begin{lem}
Suppose that $(X, \rho)$ is a $\sigma$-finite measure space and $K(x, y)$ is a nonnegative measurable function on $X \times X$ and $T$ the associated integral operator
$$Tf(x)=\int_X K(x,y) f(y) \, d\rho(y).$$
Let $1 < p < \infty$ and $1/p +1/q=1$. If there exist a positive constant $C$ and a positive measurable function $h$ on $X$ such that
$$\int_X K(x,y) h(y)^q \, d\rho(y) \leq C h(x)^q$$
for almost every $x$ in $X$ and
$$\int_X K(x,y) h(x)^p \, d\rho(x) \leq C h(y)^p$$
for almost every $y$ in $X$, then $T$ is bounded on $L^p(X, d\rho)$ with $\|T\| \leq C$.
\end{lem}
We only need to consider the case when $4>p \geq 2$, and the case when $4/3 <p <2$ then follows from the duality. If we put
\begin{align*}
  &K((z_1,z_2),(w_1,w_2))= \frac{z_2 \overline{w}_2}{(1-z_2 \overline{w}_2)^2 (z_2 \overline{w}_2 - z_1 \overline{w}_1)^2},\\
  &h((z_1,z_2))=\left[(|z_2|^2 - |z_1|^2)(1-|z_2|^2)\right]^{-\frac{2}{pq}}, \\
  &C(p)=\sup_{(z_1,z_2) \in \mathcal{H}} \left\{ h((z_1,z_2))^{2/p} \int_{\mathcal{H}} \frac{|z_2 \overline{w}_2|  h((w_1,w_2))^{-2/p}  d\mu(w_1,w_2) }{|1-z_2 \overline{w}_2|^2 |z_2 \overline{w}_2 - z_1 \overline{w}_1|^2 }  \right\},
\end{align*}
where $q$ is the conjugate exponent of $p$, it is clear that
\begin{align*}
   \int_{ \mathcal{H}} K(z,w) h(w)^q \, d\mu(w) \leq C(p) h(z)^q\\
  \int_{ \mathcal{H}} K(z,w) h(z)^p \, d\mu(z) \leq C(q) h(w)^p
\end{align*}
for almost every $z \in \mathcal{H}$ and $w \in \mathcal{H}$, respectively. From Lemma \ref{mlem}
$$C(p)=\Gamma^2\left(1-\frac{2}{p}\right) \Gamma^2\left(\frac{2}{p}\right)=C(q).$$
Hence, an application of Schur's test gives the desired upper estimate. To prove the lower estimate, we define, for
$(z_1,z_2), (\xi_1,\xi_2) \in \mathcal{H}$
\begin{align*}
  f_{(\xi_1,\xi_2)} ((z_1,z_2)) :=\frac{(1-\xi_2 \overline{z}_2)^{1-\frac{2}{p}}}{z_2(1-z_2 \overline{\xi}_2)} \frac{ \left(1-\frac{\xi_1}{\xi_2} \frac{\overline{z}_1}{\overline{z}_2} \right)^{1-\frac{2}{p}} }{ 1-\frac{z_1}{z_2} \frac{\overline{\xi}_1}{\overline{\xi}_2}}.
\end{align*}
We show that
\begin{align}\label{f}
  \left\|f_{(\xi_1,\xi_2)}\right\|^p_p \approx \log \frac{1}{1-|\xi_2|^2} \log \frac{1}{1-\left| \xi_1/ \xi_2 \right|^2}, \quad \text{as} \quad |\xi_2|, \left| \xi_1/ \xi_2 \right| \rightarrow 1^{-}.
\end{align}
By the definition we have
\begin{align*}
  \left\|f_{(\xi_1,\xi_2)}\right\|^p_p &= \int_{\mathcal{H}} \frac{|1-\xi_2 \overline{z}_2|^{p-2}}{|z_2|^p \left|1-z_2 \overline{\xi}_2\right|^p} \frac{ \left|1-\frac{\xi_1}{\xi_2} \frac{\overline{z}_1}{\overline{z}_2} \right|^{p-2} }{ \left|1-\frac{z_1}{z_2} \frac{\overline{\xi}_1}{\overline{\xi}_2} \right|^p} \, d\mu(z_1,z_2) \\ &= \int_{\mathcal{H}} \frac{1}{|z_2|^p \left|1-z_2 \overline{\xi}_2\right|^2} \frac{ 1 }{ \left|1-\frac{z_1}{z_2} \frac{\overline{\xi}_1}{\overline{\xi}_2} \right|^2} \, d\mu(z_1,z_2) \\ &= \int_{\mathbb{D}^{*}} \frac{|z_2|^{-p}}{\left|1-z_2 \overline{\xi}_2\right|^2} \left[ \int_{|z_1|<|z_2|}  \frac{ 1 }{ \left|1-\frac{z_1}{z_2} \frac{\overline{\xi}_1}{\overline{\xi}_2} \right|^2} \, d\nu(z_1) \right] \, d\nu(z_2).
\end{align*}
Making the substitution $u=\frac{z_1}{z_2}$, we have
\begin{align*}
  \left\|f_{(\xi_1,\xi_2)}\right\|^p_p = \int_{\mathbb{D}^{*}} \frac{|z_2|^{2-p}}{\left|1-z_2 \overline{\xi}_2\right|^2} \left[ \int_{\mathbb{D}}  \frac{ 1 }{ \left|1- \overline{\xi}_1/ \overline{\xi}_2 u \right|^2} \, d\nu(u) \right] \, d\nu(z_2).
\end{align*}
Now (\ref{f}) follows from the well known Forelli-Rudin estimate (see \cite{R}, Proposition 1.4.10). \newline
A similar calculations show that
\begin{align*}
  \mathbf{P}f_{(\xi_1,\xi_2)} ((z_1,z_2))= \int_{ \mathbb{D}^{*}} \frac{ (1-\xi_2 \overline{z}_2)^{1-\frac{2}{p}}  \, d\nu(w_2)}{z_2(1-z_2 \overline{w}_2)^2 (1-w_2 \overline{\xi}_2)} \int_{\mathbb{D}} \frac{ \left(1-\frac{\xi_1}{\xi_2} \overline{u} \right)^{1-\frac{2}{p}} \, d\nu(u)}{ \left( 1-\frac{z_1}{z_2} \overline{u}\right)^2 \left(1-u  \frac{\overline{\xi}_1}{\overline{\xi}_2} \right)}.
\end{align*}
Using twice Lemma \ref{cal}, we get
\begin{align*}
  \mathbf{P}f_{(\xi_1,\xi_2)} ((z_1,z_2))=&\frac{1}{z_2} \sum_{k=0}^{\infty} \frac{(1)_k}{k!} F\left(2/p-1,1+k;2+k;|\xi_2|^2\right) \langle z_2,\xi_2 \rangle^k \\ &\cdot \sum_{l=0}^{\infty} \frac{(1)_l}{l!} F\left(2/p-1,1+l;2+l;|\xi_2/ \xi_2|^2\right) \left\langle \frac{z_1}{z_2}, \frac{\xi_1}{\xi_2} \right\rangle^k.
\end{align*}
It is easy to check that
\begin{align*}
  \mathbf{P}f_{(\xi_1,\xi_2)} ((z_1,z_2))=&\frac{1}{z_2}\left( \Phi_{\xi_2}(z_2) + \Psi_{\xi_2}(z_2) + \Upsilon_{\xi_2}(z_2)  \right) \\ & \cdot \left(\Phi_{\xi_1 / \xi_2}(z_1/z_2) + \Psi_{\xi_1 / \xi_2}(z_1/ z_2) + \Upsilon_{\xi_2/ \xi_2}(z_1/z_2) \right),
\end{align*}
where $\Phi_{\xi}(z)=\Gamma(2/p)\Gamma(2/q)\left(1- z \overline{\xi} \right)^{-2/p}$.
Hence
\begin{align*}
  \|\mathbf{P}\|_p &\geq \limsup_{|\xi_2|,|\xi_1 / \xi_2|\rightarrow 1^{-}} \frac{\| \mathbf{P}f_{(\xi_1,\xi_2)}\|_p}{\|f_{(\xi_1,\xi_2)}\|_p} \\& \geq \limsup_{|\xi_2|,|\xi_1 / \xi_2|\rightarrow 1^{-}} \left(\frac{ \left\|\frac{1}{z_2} \Phi_{\xi_2}(z_2) \Phi_{\xi_1 / \xi_2}(z_1/z_2)\right\|_p } {\|f_{(\xi_1,\xi_2)}\|_p} - \frac{R}{\|f_{(\xi_1,\xi_2)}\|_p} \right),
\end{align*}
where
\begin{align*}
R=&\left\|\frac{1}{z_2} \Phi_{\xi_2}(z_2) \Psi_{\xi_1 / \xi_2}(z_1/ z_2)\right\|_p + \left\|\frac{1}{z_2} \Phi_{\xi_2}(z_2) \Upsilon_{\xi_1 / \xi_2}(z_1/ z_2)\right\|_p\\    &+ \left\|\frac{1}{z_2} \Psi_{\xi_2}(z_2) \Phi_{\xi_1 / \xi_2}(z_1/ z_2)\right\|_p + \left\|\frac{1}{z_2} \Psi_{\xi_2}(z_2) \Upsilon_{\xi_1 / \xi_2}(z_1/ z_2)\right\|_p   \\&+ \left\|\frac{1}{z_2} \Upsilon_{\xi_2}(z_2) \Phi_{\xi_1 / \xi_2}(z_1/ z_2)\right\|_p + \left\|\frac{1}{z_2} \Upsilon_{\xi_2}(z_2) \Psi_{\xi_1 / \xi_2}(z_1/ z_2)\right\|_p
\\  &+ \left\|\frac{1}{z_2} \Psi_{\xi_2}(z_2) \Psi_{\xi_1 / \xi_2}(z_1/ z_2)\right\|_p + \left\|\frac{1}{z_2} \Upsilon_{\xi_2}(z_2) \Upsilon_{\xi_1 / \xi_2}(z_1/ z_2)\right\|_p.
\end{align*}
It is obvious that
\begin{align*}
   \left\|\frac{1}{z_2} \Phi_{\xi_2}(z_2) \Phi_{\xi_1 / \xi_2}(z_1/z_2)\right\|_p=\Gamma^2(2/p)\Gamma^2(2/q) \|f_{(\xi_1,\xi_2)}\|_p.
\end{align*}
The following lemma completes the proof of the main result
\begin{lem} Define $R$ as above. Then for $4/3<p<4$
\begin{align}
\limsup_{|\xi_2|,|\xi_1 / \xi_2|\rightarrow 1^{-}} \frac{R}{\|f_{(\xi_1,\xi_2)}\|_p}=0.
\end{align}
\end{lem}
But to avoid disrupting the flow of the paper with several pages of computations, we postpone its proof to the next section.
\end{proof}
\section{Proof of Lemma 3.2}
It is enough to show that each term of $R$ divided by $\|f_{(\xi_1,\xi_2)}\|_p$ goes to zero as $|\xi_2|,|\xi_1 / \xi_2|\rightarrow 1^{-}$. We start with
\begin{align*}
  \left\|\frac{1}{z_2} \Phi_{\xi_2}(z_2) \Psi_{\xi_1 / \xi_2}(z_1/ z_2)\right\|^p_p=\int_{\mathcal{H}} \left|\frac{1}{z_2} \Phi_{\xi_2}(z_2) \Psi_{\xi_1 / \xi_2}(z_1/ z_2)\right|^p \, d\mu(z_1,z_2).
\end{align*}
Making the substitution $u=z_1/z_2$, we have
\begin{align*}
  \left\|\frac{1}{z_2} \Phi_{\xi_2}(z_2) \Psi_{\xi_1 / \xi_2}(z_1/ z_2)\right\|^p_p= \int_{ \mathbb{D}^{*}} \frac{|\Phi_{\xi_2}(z_2)|^p}{|z_2|^{p-2}} \left\{ \int_{\mathbb{D}} |\Psi_{\xi_1 / \xi_2}(u)|^p \,d\nu(u) \right\} \, d\nu(z_2).
\end{align*}
Lemma \ref{rozk}  yields that there exists a constant $C>0$ such that
\begin{align*}
  \left\|\frac{1}{z_2} \Phi_{\xi_2}(z_2) \Psi_{\xi_1 / \xi_2}(z_1/ z_2)\right\|^p_p \leq  C \int_{ \mathbb{D}^{*}} \frac{|\Phi_{\xi_2}(z_2)|^p}{|z_2|^{p-2}} \, d\nu(z_2).
\end{align*}
Since $\int_{ \mathbb{D}^{*}} |z_2|^{2-p} \, d\nu(z_2)$ is finite (when $p<4$) and $\Phi_{\xi_2}(0)=\Gamma(2/p)\Gamma(2/q)$ we can write
\begin{align*}
  \left\|\frac{1}{z_2} \Phi_{\xi_2}(z_2) \Psi_{\xi_1 / \xi_2}(z_1/ z_2)\right\|^p_p \leq  C_1 \int_{ \mathbb{D}^{*}} |\Phi_{\xi_2}(z_2)|^p \, d\nu(z_2),
\end{align*}
for some constant $C_1$. Therefore by the Forelli-Rudin estimate
\begin{align*}
  \left\|\frac{1}{z_2} \Phi_{\xi_2}(z_2) \Psi_{\xi_1 / \xi_2}(z_1/ z_2)\right\|^p_p \approx \log \frac{1}{1-|\xi_2|^2}, \quad \text{as} \quad |\xi_2| \rightarrow 1^{-}.
\end{align*}
Hence, by (\ref{f})
\begin{align*}
   \limsup_{|\xi_2|,|\xi_1 / \xi_2|\rightarrow 1^{-}} \frac{\left\|\frac{1}{z_2} \Phi_{\xi_2}(z_2) \Psi_{\xi_1 / \xi_2}(z_1/ z_2)\right\|_p}{\|f_{(\xi_1,\xi_2)}\|_p}=0.
\end{align*}
The limit
\begin{align*}
   \limsup_{|\xi_2|,|\xi_1 / \xi_2|\rightarrow 1^{-}} \frac{\left\|\frac{1}{z_2} \Phi_{\xi_2}(z_2) \Upsilon_{\xi_1 / \xi_2}(z_1/ z_2)\right\|_p}{\|f_{(\xi_1,\xi_2)}\|_p}=0.
\end{align*}
can be obtained by the similar method and we omit the details. \newline Let us now consider
\begin{align*}
  \left\|\frac{1}{z_2} \Psi_{\xi_2}(z_2) \Phi_{\xi_1 / \xi_2}(z_1/ z_2)\right\|^p_p=\int_{\mathcal{H}} \left| \frac{1}{z_2} \Psi_{\xi_2}(z_2) \Phi_{\xi_1 / \xi_2}(z_1/ z_2) \right|^p \, d\mu(z_1,z_2).
\end{align*}
As before we make a substitution $u=z_1/z_2$
\begin{align*}
  \left\|\frac{1}{z_2} \Psi_{\xi_2}(z_2) \Phi_{\xi_1 / \xi_2}(z_1/ z_2)\right\|^p_p=\int_{ \mathbb{D}^{*}} \frac{|\Psi_{\xi_2}(z_2)|^p}{|z_2|^{p-2}} \left\{ \int_{\mathbb{D}} |\Phi_{\xi_1 / \xi_2}(u)|^p \, d\nu(u) \right\} \, d\nu(z_2).
\end{align*}
From the Forelli-Rudin estimate there exists a constant $C>0$ such that
\begin{align*}
  \left\|\frac{1}{z_2} \Psi_{\xi_2}(z_2) \Phi_{\xi_1 / \xi_2}(z_1/ z_2)\right\|^p_p \leq  C \log \frac{1}{1-\left| \xi_1/ \xi_2 \right|^2} \int_{ \mathbb{D}^{*}} \frac{|\Psi_{\xi_2}(z_2)|^p}{|z_2|^{p-2}}  \, d\nu(z_2).
\end{align*}
Since $\int_{ \mathbb{D}^{*}} |z_2|^{2-p} \, d\nu(z_2)$ is finite (when $p<4$) and $$\Psi_{\xi_2}(0)=\Gamma(2/p)\Gamma(2/q)\left(\frac{1}{\Gamma(1+2/q)\Gamma(2/p)}-1\right),$$
then
\begin{align*}
  \left\|\frac{1}{z_2} \Psi_{\xi_2}(z_2) \Phi_{\xi_1 / \xi_2}(z_1/ z_2)\right\|^p_p \leq  C_1 \log \frac{1}{1-\left| \xi_1/ \xi_2 \right|^2}.
\end{align*}
Therefore, by (\ref{f})
\begin{align*}
   \limsup_{|\xi_2|,|\xi_1 / \xi_2|\rightarrow 1^{-}} \frac{\left\|\frac{1}{z_2} \Psi_{\xi_2}(z_2) \Phi_{\xi_1 / \xi_2}(z_1/ z_2)\right\|_p}{\|f_{(\xi_1,\xi_2)}\|_p}=0.
\end{align*}
A similar calculation reveals that
\begin{align*}
  \left\|\frac{1}{z_2} \Upsilon_{\xi_2}(z_2) \Phi_{\xi_1 / \xi_2}(z_1/ z_2)\right\|^p_p \leq  C \log \frac{1}{1-\left| \xi_1/ \xi_2 \right|^2} \int_{ \mathbb{D}^{*}} \frac{|\Upsilon_{\xi_2}(z_2)|^p}{|z_2|^{p-2}}  \, d\nu(z_2),
\end{align*}
for some constant $C$. Since
\begin{align*}
  &\Upsilon_{0}(0)= 1 - \frac{\Gamma(2/q) \Gamma(2) }{\Gamma(1+2/q)}, \\
  &\limsup_{|\xi_2| \rightarrow 1^{-}}  \Upsilon_{\xi_2}(0)=\frac{\Gamma(2-2/p) \Gamma(2) }{\Gamma(3-2/p)} - \frac{\Gamma(2/q) \Gamma(2) }{\Gamma(1+2/q)},
\end{align*}
we conclude that
\begin{align*}
   \limsup_{|\xi_2|,|\xi_1 / \xi_2|\rightarrow 1^{-}} \frac{\left\|\frac{1}{z_2} \Upsilon_{\xi_2}(z_2) \Phi_{\xi_1 / \xi_2}(z_1/ z_2)\right\|_p}{\|f_{(\xi_1,\xi_2)}\|_p}=0.
\end{align*}
Next we investigate
\begin{align*}
   \left\|\frac{1}{z_2} \Upsilon_{\xi_2}(z_2) \Upsilon_{\xi_1 / \xi_2}(z_1/ z_2)\right\|^p_p \quad \text{and} \quad
  \left\|\frac{1}{z_2} \Upsilon_{\xi_2}(z_2) \Psi_{\xi_1 / \xi_2}(z_1/ z_2)\right\|^p_p.
\end{align*}
After a change of variables
\begin{align*}
   \left\|\frac{\Upsilon_{\xi_2}(z_2) \Upsilon_{\xi_1 / \xi_2}(z_1/ z_2)}{z_2} \right\|^p_p=\int_{ \mathbb{D}^{*}} \frac{|\Upsilon_{\xi_2}(z_2)|^p}{|z_2|^{p-2}} \left\{ \int_{\mathbb{D}} |\Upsilon_{\xi_1 / \xi_2}(u)|^p \, d\nu(u) \right\} \, d\nu(z_2),\\
  \left\|\frac{\Upsilon_{\xi_2}(z_2) \Psi_{\xi_1 / \xi_2}(z_1/ z_2)}{z_2} \right\|^p_p=\int_{ \mathbb{D}^{*}} \frac{|\Upsilon_{\xi_2}(z_2)|^p}{|z_2|^{p-2}} \left\{ \int_{\mathbb{D}} |\Psi_{\xi_1 / \xi_2}(u)|^p \, d\nu(u) \right\} \, d\nu(z_2).
\end{align*}
Hence, (using  Lemma \ref{rozk})
\begin{align*}
   \left\|\frac{1}{z_2} \Upsilon_{\xi_2}(z_2) \Upsilon_{\xi_1 / \xi_2}(z_1/ z_2)\right\|^p_p \leq C \int_{ \mathbb{D}^{*}} \frac{|\Upsilon_{\xi_2}(z_2)|^p}{|z_2|^{p-2}} \, d\nu(z_2),\\
  \left\|\frac{1}{z_2} \Upsilon_{\xi_2}(z_2) \Psi_{\xi_1 / \xi_2}(z_1/ z_2)\right\|^p_p \leq D \int_{ \mathbb{D}^{*}} \frac{|\Upsilon_{\xi_2}(z_2)|^p}{|z_2|^{p-2}}  \, d\nu(z_2).
\end{align*}
for some constants $C,D$. Since the function $\Upsilon_{\xi_2}(0)$ (as a function of $\xi$) is bounded on $\mathbb{D}$ and $\int_{ \mathbb{D}^{*}} |z_2|^{2-p} \, d\nu(z_2)$ is finite, it is easy to see that
 \begin{align*}
   \left\|\frac{1}{z_2} \Upsilon_{\xi_2}(z_2) \Upsilon_{\xi_1 / \xi_2}(z_1/ z_2)\right\|^p_p \leq C_1 \int_{ \mathbb{D}^{*}} |\Upsilon_{\xi_2}(z_2)|^p \, d\nu(z_2),\\
  \left\|\frac{1}{z_2} \Upsilon_{\xi_2}(z_2) \Psi_{\xi_1 / \xi_2}(z_1/ z_2)\right\|^p_p \leq D_1 \int_{ \mathbb{D}^{*}} |\Upsilon_{\xi_2}(z_2)|^p  \, d\nu(z_2).
\end{align*}
Apply Lemma \ref{rozk} again, we have
 \begin{align*}
  \sup_{(\xi_1,\xi_2) \in \mathcal{H}} \left\|\frac{1}{z_2} \Upsilon_{\xi_2}(z_2) \Upsilon_{\xi_1 / \xi_2}(z_1/ z_2)\right\|_p \leq \infty,\\
  \sup_{(\xi_1,\xi_2) \in \mathcal{H}} \left\|\frac{1}{z_2} \Upsilon_{\xi_2}(z_2) \Psi_{\xi_1 / \xi_2}(z_1/ z_2)\right\|_p \leq \infty.
\end{align*}
Thus
\begin{align*}
   \limsup_{|\xi_2|,|\xi_1 / \xi_2|\rightarrow 1^{-}} \frac{\left\|\frac{1}{z_2} \Upsilon_{\xi_2}(z_2) \Upsilon_{\xi_1 / \xi_2}(z_1/ z_2)\right\|_p}{\|f_{(\xi_1,\xi_2)}\|_p}=0, \\ \limsup_{|\xi_2|,|\xi_1 / \xi_2|\rightarrow 1^{-}} \frac{\left\|\frac{1}{z_2} \Upsilon_{\xi_2}(z_2) \Psi_{\xi_1 / \xi_2}(z_1/ z_2)\right\|_p}{\|f_{(\xi_1,\xi_2)}\|_p}=0.
\end{align*}
This is what we wanted to establish. A similar calculations show that
\begin{align*}
   \limsup_{|\xi_2|,|\xi_1 / \xi_2|\rightarrow 1^{-}} \frac{\left\|\frac{1}{z_2} \Psi_{\xi_2}(z_2) \Psi_{\xi_1 / \xi_2}(z_1/ z_2)\right\|_p}{\|f_{(\xi_1,\xi_2)}\|_p}=0, \\ \limsup_{|\xi_2|,|\xi_1 / \xi_2|\rightarrow 1^{-}} \frac{\left\|\frac{1}{z_2} \Psi_{\xi_2}(z_2) \Upsilon_{\xi_1 / \xi_2}(z_1/ z_2)\right\|_p}{\|f_{(\xi_1,\xi_2)}\|_p}=0.
\end{align*}
That completes the proof.
\bibliographystyle{amsplain}

\noindent Tomasz Beberok\\
Department of Applied Mathematics,\\
University of Agriculture in Krakow,\\
ul. Balicka 253c, 30-198 Kraków, Poland\\
email: tbeberok@ar.krakow.pl

\end{document}